\documentclass{amsart}
\usepackage{graphicx} 
\usepackage{amssymb}
\usepackage{amsmath}
\allowdisplaybreaks
\usepackage{amsthm}
\usepackage{fourier}
\DeclareMathAlphabet{\mathcal}{OMS}{cmsy}{m}{n}
\usepackage{amsfonts}
\usepackage[backend=bibtex,
    sorting=anyt,
    isbn=false,
    url=false,
    doi=false,
    maxbibnames=100]{biblatex}
\bibliography{references} 

\usepackage{tikz-cd}
\usepackage{tikz}
\usepackage{hyperref}

\newtheorem{thm}{Theorem}[section]

\theoremstyle{remark}

\begin{document}
\title{Binomial coefficients and the operadic butterfly}

\begin{abstract}
We prove the conjecture of Laubie on dimensions of components of two operads completing Loday's operadic butterfly diagram. For one of these operads, the dimension of the $n$-th component is equal to $\binom{2n}{n-1}$ for all $n\ne 3$, and for the other it is equal to $\binom{2n}{n-1}+1$ for all $n\ge 5$.    
\end{abstract}

\author{Vladimir Dotsenko}

\address{Institute of Mathematics, Academy of Mathematics and Systems Science, Chinese Academy of Sciences, Beijing, 100190, China}
\address{Institute of Mathematics and Mathematical Modeling, Almaty, Kazakhstan}

\email{vdotsenko@unistra.fr}

\author{Medet Jumadildayev}

\address{Institute of Mathematics and Mathematical Modeling, Almaty, Kazakhstan}

\address{Nazarbayev University, Astana, Kazakhstan}

\email{medetdzhuma@gmail.com}

\maketitle

\section{Introduction}

In \cite{Loday} Loday considered the following diagram expressing relationships between different types of algebras that emerged in his work at various points:
 \[
\begin{tikzcd}[row sep=2.5em, column sep=2.5em]
        & \mathit{Dend} 
        \arrow[dr, "+"] & & \mathit{Dias} \arrow[dr, "-"] & \\
        \mathit{Zinb} \arrow[ur, hook] \arrow[dr, "+"] & & \mathit{As} \arrow[ur, hook] \arrow[dr, "-"] & & \mathit{Leib} \\
        & \mathit{Com} \arrow[ur, hook] 
        & & \mathit{Lie} \arrow[ur, hook] & 
\end{tikzcd}
 \]
Here $\mathit{Com}$ denotes commutative associative algebras, $\mathit{Lie}$ denotes Lie algebras, $\mathit{Leib}$ denotes Leibniz algebras (first introduced by Bloh \cite{MR193114} under the name ``$D$-algebras'' and then independently rediscovered by Loday; it appeared in print in an article of his student Cuvier \cite{MR1133486}), $\mathit{As}$ denotes associative algebras, $\mathit{Zinb}$ denotes Zinbiel algebras (originally introduced under the name ``dual Leibniz algebras'' by Loday \cite{MR1379265} and then renamed Zinbiel following a suggestion of Lemaire), $\mathit{Dend}$ denotes dendriform algebras (originally defined as dual dialgebras by Loday \cite{MR1345436}), and $\mathit{Dias}$ denotes diassociative algebras (originally introduced as dialgebras by Loday \cite{MR1345436}). 

All arrows in this diagram are functors between categories of algebras. Zinbiel algebras are a particular class of dendriform algebras, and similarly associative algebras are a particular class of diassociative algebras, commutative associative algebras are a particular class of associative algebras, and Lie algebras are a particular class of Leibniz algebras. The plus sign on the arrows corresponds to adding operations: the sum of operations in a dendriform algebra is associative, and the anticommutator in a Zinbiel algebra is associative and commutative. Finally, the minus sign corresponds to subtracting operations: a suitable difference of operations in a diassociative algebra satisfies the Leibniz identity, and the commutator in an associative algebra defines a Lie algebra structure. 

Nowadays it is common to encode types of algebras using operads \cite{LodayVallette}; the above diagram arises from maps between the corresponding operads, if one reverses all the arrows: 
 \[
\begin{tikzcd}[row sep=2.5em, column sep=2.5em]
        & \mathsf{Dend} 
        \arrow[dl, two heads] & & \mathsf{Dias}  \arrow[dl, two heads] & \\
        \mathsf{Zinb}  & & \mathsf{As}\arrow[ul, "+"] \arrow[dl, two heads]   & & \mathsf{Leib} \arrow[ul, "-"]\arrow[dl, two heads]\\
        & \mathsf{Com} \arrow[ul, "+"] 
        & & \mathsf{Lie}\arrow[ul, "-"]  & 
\end{tikzcd}
 \]
For instance, the functor $\mathit{As}\overset{-}{\longrightarrow}\mathit{Lie}$ corresponds to the map of operads $\mathsf{Lie}\to\mathsf{As}$ sending the binary operation $[a_1,a_2]$ in $\mathsf{Lie}$ to the operation $a_1a_2-a_2a_1$ in $\mathsf{As}$. Because of the shape of the diagram, Loday referred to it as the ``operadic butterfly''.

This diagram possesses an order-two symmetry given by the Koszul duality for operads \cite{MR1301191}: we have
\begin{gather*}
\mathsf{Dend}^!\cong\mathsf{Dias},\quad 
\mathsf{Zinb}^!\cong\mathsf{Leib},\\
\mathsf{As}^!\cong\mathsf{As},\quad 
\mathsf{Com}^!\cong\mathsf{Lie}
\end{gather*}
Moreover, all operads in this diagram are Koszul. 

The question raised in \cite{Loday} was whether there exists a Koszul operad $\mathcal X$ such that $\mathcal X^! \cong \mathcal X$, such that dendriform algebras are a subclass of $\mathcal X$-algebras, and diassociative algebras are obtained from $\mathcal X$-algebras by forming certain sums of generating operations. In \cite[Th.~4.1]{Loday}, it is established that there are two candidate solutions $\mathcal X^{\pm}$ which correspond to algebras with four structure binary operations $x$, $y$, $z$, and $t$ and relations
\begin{gather*}
    x(x(a_1, a_2), a_3) = x(a_1, x(a_2, a_3)) + x(a_1, t(a_2, a_3)),\\
    x(x(a_1, a_2), a_3) = x(a_1, y(a_2, a_3)) + x(a_1, z(a_2, a_3)),\\
    x(z(a_1, a_2), a_3) = z(a_1, x(a_2, a_3)) + z(a_1, t(a_2, a_3)),\\
    z(x(a_1, a_2), a_3) = z(a_1, z(a_2, a_3)) + z(a_1, y(a_2, a_3)),\\
    z(z(a_1, a_2), a_3) = z(a_1, z(a_2, a_3)) + z(a_1, y(a_2, a_3)),\\
    x(t(a_1, a_2), a_3) = t(a_1, x(a_2, a_3)),\\
    x(t(a_1, a_2), a_3) = t(a_1, z(a_2, a_3)),\\
    x(y(a_1, a_2), a_3) = y(a_1, x(a_2, a_3)),\\
    z(t(a_1, a_2), a_3) = y(a_1, z(a_2, a_3)),\\
    z(y(a_1, a_2), a_3) = y(a_1, z(a_2, a_3)),\\
    t(x(a_1, a_2), a_3) + t(t(a_1, a_2), a_3) = t(a_1, t(a_2, a_3)),\\
    t(x(a_1, a_2), a_3) + t(t(a_1, a_2), a_3) = t(a_1, y(a_2, a_3)),\\
    t(z(a_1, a_2), a_3) + t(y(a_1, a_2), a_3) = y(a_1, t(a_2, a_3)),\\
    y(x(a_1, a_2), a_3) + y(t(a_1, a_2), a_3) = y(a_1, y(a_2, a_3)),\\
    y(z(a_1, a_2), a_3) + y(y(a_1, a_2), a_3) = y(a_1, y(a_2, a_3)),\\
    y(z(a_1, a_2), a_3) - y(x(a_1, a_2), a_3) = \pm \bigl(x(a_1, t(a_2, a_3)) - x(a_1, y(a_2, a_3))\bigr).\\
\end{gather*}

These operads are regular: the operations have no symmetry and all identities have the arguments in the same order. For that reason, for each $n\ge 1$, we have 
 \[
\mathcal{X}^{\pm}(n)\cong \mathcal{X}^{\pm}_n\otimes\mathbb{Q} S_n ,
 \]
where $\mathcal{X}^{\pm}_n$ is a vector space of generators of the free $\mathbb{Q} S_n$-module $\mathcal{X}^{\pm}(n)$; in the language of  operad theory, $\mathcal{X}^{\pm}_n$ is the $n$-th component of the corresponding \emph{nonsymmetric operad}. Working with nonsymmetric operads is much easier for computations, since one avoids the blow-up of dimensions coming from actions of symmetric groups.  

In \cite[Sec.~4.2]{Loday}, the question of whether the operads $\mathcal{X}^{\pm}$ have the Koszul property was raised; it is  indicated there that the Koszul property implies that $\dim \mathcal{X}^{\pm}_n=4^{n-1}$. Recently, Laubie \cite[Prop.~2.4.3.3]{Laubie} showed that $\dim\mathcal X^+_4=58$ and $\dim\mathcal X^-_4=56$, which is different from the expected dimension $4^{4-1}=64$. (The premise that the Koszul property allows one to recover all dimensions is perhaps somewhat unfounded; we address it below in Section~\ref{sec:Koszul}.) 

In fact, in the proof of \cite[Prop.~2.4.3.3]{Laubie}, $\dim_{\mathbb{Q}} \mathcal{X}^{\pm}_n$ was computed for $n \leq 7$ using the \texttt{Haskell} operadic Gröbner bases calculator \cite{Dotsenko-Heijltjes}: 
 \[
\begin{array}{|c|c|c|c|c|c|c|c|c|}
\hline
n     & 1&2&3&4&5&6&7 \\
\hline
\dim_{\mathbb{Q}}\mathcal X^-_n   &1&4&16&56&210&792&3003  \\
\hline
\dim_{\mathbb{Q}}\mathcal X^+_n   &1&4&16&58&211&793&3004 \\
\hline 
\end{array}
 \]
These data led to a question \cite[Conj.~2.4.3.4]{Laubie} whether for all $n\ne 3$ we have 
 \[
\dim_{\mathbb{Q}}\mathcal X^-_n=\binom{2n}{n-1}. 
 \]
We answer this question in the affirmative, and also establish that for $n\ge 5$ we have
 \[
\dim_{\mathbb{Q}}\mathcal X^+_n=\binom{2n}{n-1}+1. 
 \]
It is clear that the dimensions of components of our operads only depend on the characteristic of the ground field and not the field itself; in Section \ref{sec:char-p}, we also describe how the results need to be amended if we consider our operads over a field $\mathbb{F}_p$. 

These results are proved using Gröbner bases for operads \cite{Dotsenko-Khoroshkin}. We managed to find an order of monomials for which the operad $\mathcal X^-$ has a finite Gröbner basis; it is proved by Khoroshkin and Piontkovski \cite{MR3301915}, extending previous purely combinatorial work of Rowland \cite{MR2645188},  that the Hilbert series
 \[
f_{\mathcal{P}}(q):=\sum_{n \geq 1} \dim\mathcal{P}_n q^n  
 \]
of a nonsymmetric operad $\mathcal{P}$ with a finite Gröbner basis is algebraic, and we use a version of their approach to find a closed formula for the Hilbert series $f_{\mathcal{X}^-}(q)$.
For the same ordering of monomials, the operad~$\mathcal{X}^+$ turns out to have an infinite but manageable Gröbner basis, which we could use to prove the dimension formula. 

\subsection*{Acknowledgments. } We thank Paul Laubie for useful discussions and for sharing the input file for \cite{Dotsenko-Heijltjes} which he used in his computation. This work was supported by the Science Committee of the Ministry of Science and Higher Education of the Republic of Kazakhstan (Grant No. BR 28713025). One of the \texttt{Python} scripts in the online addendum \cite{addendum} to the paper (\texttt{operad\_buchberger\_Xminus\_Xplus\_arity6.py})  was produced by ChatGPT-5.6 and then checked by the authors. 

\section{A short recollection of Gröbner bases for operads}

As indicated in the introduction, we shall use Gröbner bases for nonsymmetric operads, and we only briefly recall the relevant material, referring the reader to~\cite[Chapter 3]{MR3642294} for details.

Suppose that $\mathcal{P}=\{\mathcal{P}_n\}_{n\ge 1}$ is a nonsymmetric operad and $G$ is its Gröbner basis for a certain order of monomials. A Gröbner basis may be viewed as a collection of rewriting rules that allow one to eliminate all the leading terms. The defining property of a Gröbner basis is that this elimination is consistent, and $\mathcal{P}_n$ has a basis of cosets of monomials that are not divisible by the leading terms of elements of $G$, that is, do not contain those leading terms as fragments. 

Throughout this note, we compute Gröbner bases for a certain weight graded path-lexicographic order $\prec_{\mathrm{wpl}}$. Concretely, we define the weight function $\mathsf{wt}$ by 
 \[
\mathsf{wt}(y) = \mathsf{wt}(z) = 4, \quad \mathsf{wt}(x) = \mathsf{wt}(t) = 1,
 \]
and if for two monomials $\alpha$ and $\beta$, we have $\mathsf{wt}(\alpha)<\mathsf{wt}(\beta)$, we declare that $\alpha\prec_{\mathrm{wpl}}\beta$. If $\mathsf{wt}(\alpha)=\mathsf{wt}(\beta)$, we impose the path-lexicographic order $\prec_{\mathrm{pl}}$ for the order of operations
 \[
t \prec y \prec x \prec z.
 \]
The order $\prec_{\mathrm{pl}}$ works in the following way. To a monomial $\mu$ with $n$ arguments, we assign a sequence of words $(w_1(\mu),\ldots,w_n(\mu))$ in the alphabet $\{x,y,z,t\}$, where the word $w_k(\mu)$ is obtained by recording the sequence of operations leading to the  $k$-th argument of $\mu$: if the top level operation of $\mu$ is $v$, so that $\mu=v(\mu',\mu'')$ where $\mu'$ has $n'$ arguments, we set
 \[
w_k(\mu)=
\begin{cases}
\quad v w_k(\mu'), \qquad 1\le k\le n',\\
v w_{k-n'}(\mu''), \quad n'+1\le k\le n.
\end{cases}
 \]
To compare two monomials (with the same number of arguments) with respect to the  order $\prec_{\mathrm{pl}}$, we compare the corresponding sequences word by word, using the graded lexicographic order of words: we say that $\alpha\prec_{\mathrm{pl}}\beta$ if the smallest $k$ for which 
$w_k(\alpha)\ne w_k(\beta)$ satisfies $w_k(\alpha)\prec_{\mathrm{glex}} w_k(\beta)$.

\section{Hilbert series of the operad \texorpdfstring{$\mathcal{X^-}$}{Xminus}}\label{sec:Xminus}

In this section, we shall prove the following result.

\begin{thm} \label{th:Xminus}
For dimensions of components of the operad $\mathcal X^-$, we have 
    \[
    \dim_{\mathbb{Q}}\mathcal X^-_n=\binom{2n}{n-1}
    \]
for all $n\ne 3$.    
\end{thm}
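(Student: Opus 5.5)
Our plan is to compute a Gröbner basis of $\mathcal X^-$ for the weight graded path-lexicographic order $\prec_{\mathrm{wpl}}$ from the previous section, to show that it is finite, and then to enumerate normal monomials.

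\textbf{Step 1: the Gröbner basis.} Starting from the sixteen quadratic relations (with the minus sign in the last one), we run the operadic Buchberger algorithm. We form all S-polynomials from small common multiples of leading terms, which are trees with three internal vertices. We reduce them and add the nonzero remainders as new cubic elements. We then iterate in arity $5$, and possibly arity $6$, with computer assistance. The goal is to reach a stage where every S-polynomial reduces to zero. Since the largest elements of the basis have bounded arity, this is a finite check by the Diamond Lemma for shuffle/nonsymmetric operads: one only needs to verify overlaps of pairs of leading terms, which live in arity at most the sum of the two arities minus one. The exceptional value $\dim\mathcal X^-_3=16\ne\binom{6}{2}=15$ should appear naturally here: in arity $3$ no new relations of arity $3$ occur, so there are $64-48=16$ normal cubic monomials. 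By contrast, new cubic-vertex elements arising from overlaps in arity $4$ cut the counts down from arity $4$ on. This gives $56=\binom{8}{3}$.

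\textbf{Step 2: counting normal forms.} Once the leading terms $L$ (a finite set of tree monomials) are known, the normal monomials are the planar binary trees with vertices labelled by $x,y,z,t$ that avoid all patterns in $L$. Following Khoroshkin--Piontkovski and Rowland, we introduce finitely many auxiliary generating functions $f_S(q)$. Here $f_S$ counts normal trees whose root-neighbourhood (up to the depth of the deepest pattern in $L$) is of type $S$. The root decomposition $v(\mu',\mu'')$ then yields a finite system of polynomial equations in the $f_S$, expressing which left/right subtree types are allowed under each operation $v$. We eliminate the auxiliary series to get one algebraic equation for $f=f_{\mathcal X^-}$.

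\textbf{Step 3: matching the target.} The series $B(q)=\sum_{n\ge1}\binom{2n}{n-1}q^n$ is algebraic. Indeed, $B=\frac{1-2q-\sqrt{1-4q}}{2q\sqrt{1-4q}}$, with $C$ the Catalan series. We expect $f(q)=B(q)+q^3$, and we verify that $f-q^3$ satisfies the minimal equation of $B$. We also check that enough initial coefficients agree to select the correct branch; the table gives the data up to $n=7$. This yields the statement for all $n\ne3$.

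The main obstacle is Step 1: guaranteeing termination of Buchberger's algorithm for this order and certifying that all overlaps reduce to zero. If the basis stabilises only in arity $5$ or $6$, the overlap check involves many trees and must be done by machine. A secondary difficulty is organising the pattern-avoidance system in Step 2 so that it stays small. We would first try to classify normal trees by their root label together with the label of the root's right (or left) child, which should suffice if all leading terms have depth at most $2$ along one side.
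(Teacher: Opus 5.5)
Your plan is essentially the paper's proof: the machine-computed Gröbner basis for $\prec_{\mathrm{wpl}}$ is already finite in arity four (sixteen elements of arity three and sixteen of arity four, each leading term having $\mathrm{id}$ as one argument of its top operation), and the normal forms are counted by a small system in the series $A_v$, $A^l_{u,v}$, $A^r_{u,v}$, which the paper solves explicitly as $q^3+qC'$ with $C=q(1+C)^2$ instead of by elimination and branch selection. Two small slips, neither affecting the argument: arity three has $2\cdot 4^2=32$ monomials and $16$ leading terms (not $64-48$), and an overlap of leading terms of arities $a$ and $b$ has arity at most $a+b-2$.
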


\begin{proof}
As we indicated above, a computation using the program \cite{Dotsenko-Heijltjes} shows that the operad $\mathcal{X}^-$ has a finite Gröbner basis for the order $\prec_{\mathrm{wpl}}$; the input file for that computation is available in the online addendum \cite{addendum}. The rewriting system arising from that Gröbner basis consists of thirty-two rewriting rules, sixteen of arity three:
    \begin{gather}
        y(z(a_1, a_2), a_3) \mapsto y(x(a_1, a_2), a_3)  +  x(a_1, y(a_2, a_3)) - x(a_1, t(a_2, a_3)),\label{eq:ar3begin}\\
        y(y(a_1, a_2), a_3) \mapsto y(t(a_1, a_2), a_3)  -  x(a_1, y(a_2, a_3)) + x(a_1, t(a_2, a_3)), \\
        y(a_1, y(a_2, a_3))  \mapsto  y(x(a_1, a_2), a_3)  +  y(t(a_1, a_2), a_3) \label{eq:yry}, \\
        t(z(a_1, a_2), a_3)  \mapsto  - t(y(a_1, a_2), a_3)  +  y(a_1, t(a_2, a_3)), \\
        t(a_1, y(a_2, a_3))  \mapsto  t(a_1, t(a_2, a_3)), \\
        t(x(a_1, a_2), a_3)  \mapsto  - t(t(a_1, a_2), a_3)  +  t(a_1, t(a_2, a_3)), \\
        z(y(a_1, a_2), a_3)  \mapsto  z(t(a_1, a_2), a_3), \\
        y(a_1, z(a_2, a_3))  \mapsto  z(t(a_1, a_2), a_3), \\
        x(y(a_1, a_2), a_3)  \mapsto  y(a_1, x(a_2, a_3)), \\
        t(a_1, z(a_2, a_3))  \mapsto  t(a_1, x(a_2, a_3)), \\
        x(t(a_1, a_2), a_3)  \mapsto  t(a_1, x(a_2, a_3)), \\
        z(z(a_1, a_2), a_3)  \mapsto  z(x(a_1, a_2), a_3), \\
        z(a_1, z(a_2, a_3))  \mapsto  - z(a_1, y(a_2, a_3))  +  z(x(a_1, a_2), a_3), \\
        x(z(a_1, a_2), a_3)  \mapsto  z(a_1, x(a_2, a_3))  +  z(a_1, t(a_2, a_3)), \\
        x(a_1, z(a_2, a_3))  \mapsto  - x(a_1, y(a_2, a_3))  +  x(a_1, x(a_2, a_3))  +  x(a_1, t(a_2, a_3)), \\
        x(x(a_1, a_2), a_3)  \mapsto  x(a_1, x(a_2, a_3))  +  x(a_1, t(a_2, a_3))\label{eq:ar3end}
    \end{gather}
    and sixteen of arity four:
    \begin{gather}
        t(a_1, t(y(a_2, a_3), a_4))  \mapsto  t(a_1, t(t(a_2, a_3), a_4)), \label{eq:ar4begin}\\
        y(x(a_1, y(a_2, a_3)), a_4)  \mapsto  y(x(a_1, t(a_2, a_3)), a_4), \\
        y(a_1, x(a_2, y(a_3, a_4)))  \mapsto  y(a_1, x(a_2, t(a_3, a_4))), \\
        x(a_1, y(t(a_2, a_3), a_4))  \mapsto  x(a_1, t(t(a_2, a_3), a_4)), \\
        x(a_1, y(x(a_2, a_3), a_4))  \mapsto  - x(a_1, t(t(a_2, a_3), a_4))  +  x(a_1, t(a_2, t(a_3, a_4))), \\
        z(t(y(a_1, a_2), a_3), a_4)  \mapsto  z(t(t(a_1, a_2), a_3), a_4), \\
        z(x(a_1, y(a_2, a_3)), a_4)  \mapsto  z(x(a_1, t(a_2, a_3)), a_4), \\
        y(t(y(a_1, a_2), a_3), a_4)  \mapsto  y(t(t(a_1, a_2), a_3), a_4), \\
        x(a_1, x(a_2, y(a_3, a_4)))  \mapsto  x(a_1, x(a_2, t(a_3, a_4))), \\
        t(t(t(a_1, a_2), a_3), a_4)  \mapsto  - t(t(a_1, x(a_2, a_3)), a_4)  +  t(t(a_1, a_2), t(a_3, a_4)), \\
        t(t(y(a_1, a_2), a_3), a_4)  \mapsto  - t(y(a_1, x(a_2, a_3)), a_4)  +  t(y(a_1, a_2), t(a_3, a_4)), \\
        x(a_1, t(y(a_2, a_3), a_4))  \mapsto  x(a_1, t(t(a_2, a_3), a_4)), \\
        z(a_1, x(a_2, y(a_3, a_4)))  \mapsto  z(a_1, x(a_2, t(a_3, a_4))), \\
        x(a_1, y(a_2, t(a_3, a_4)))  \mapsto  x(a_1, t(a_2, t(a_3, a_4))), \label{eq:ar4end}\\
        x(a_1, y(a_2, x(a_3, a_4)))  \mapsto  x(a_1, t(a_2, x(a_3, a_4))),\label{eq:extra1}\\        
        t(a_1, x(a_2, y(a_3, a_4)))  \mapsto  t(a_1, x(a_2, t(a_3, a_4))).\label{eq:extra2}
    \end{gather}

An important feature of this Gröbner basis is that for each of its leading terms, one of the two arguments of the top level operation is the arity-one monomial $\mathrm{id}$. Thus, it is relatively easy to enumerate the normal forms, that is, the monomials that do not contain any of the leading terms as a fragment.
For that, we shall define the following sequences:
\begin{itemize}
\item[-] for $v \in \{ x, y, z, t\}$, we set $a_{v}(n)$ to be the number of normal forms $\alpha$ with $n$ arguments and top level operation $v$,
\item[-] for $u, v \in \{ x, y, z, t\}$, we set $a_{u,v}^l(n)$ to be the number of normal forms $\alpha$ with $n$ arguments and top level operation $v$ such that $u(\alpha,\mathrm{id})$ is a normal form (``$\alpha$ is left compatible with $u$''),
\item[-] for $u, v \in \{ x, y, z, t\}$, we set $a_{u,v}^r(n)$ to be the number of normal forms $\alpha$ with $n$ arguments and root $v$ such that $u(\mathrm{id},\alpha)$ is a normal form (``$\alpha$ is right compatible with $u$'').
\end{itemize}
We consider the following formal power series in one variable $q$:  
\begin{align*}
    A_{v}(q) &= \sum_{n \geq 1} a_{v}(n) q ^ n, \\
    A_{u,v}^l(q) &= \sum_{n \geq 1} a_{u,v}^l(n) q ^ n, \\
    A_{u,v}^r(q) &= \sum_{n \geq 1} a_{u,v}^r(n) q ^ n.
\end{align*}
Note that every normal form of $\mathcal X^-$ is either the unary  operation $\mathrm{id}$ or a monomial with a concrete top level operation, so the Hilbert series of $\mathcal X^-$ is given by the sum 
 \[
q + A_{x}(q) + A_{y}(q) + A_{z}(q) + A_{t}(q). 
 \]
 
The first group of equations relating these series to one another  corresponds to elements of arity three in the Gröbner basis. Namely, these series satisfy the following:
\begin{align}
A_{x}(q) &= q \cdot (q+A_{x, x}^r(q)+A_{x, y}^r(q)+A_{x, t}^r(q)), \label{eq:x}\\ 
A_{y}(q) &= (q+A_{y, x}^l(q)+A_{y, t}^l(q)) \cdot (q+A_{y, x}^r(q)+A_{t}(q)),\label{eq:y} \\ 
A_{z}(q) &= (q+A_{z, x}^l(q)+A_{z, t}^l(q)) \cdot (q+A_{z, x}^r(q)+A_{y}(q)+A_{t}(q)),\label{eq:z} \\ 
A_{t}(q) &= (q+A_{y}(q)+A_{t, t}^l(q)) \cdot (q+A_{t, x}^r(q)+A_{t, t}^r(q)),\label{eq:t}  
\end{align}

Indeed, if $v \in \{ x, y, z, t\}$, and $\alpha$ and $\beta$ are normal forms, then the monomial $v(\alpha,\beta)$ is a normal form if and only if $\alpha$ is left compatible with $v$ and $\beta$ is right compatible with $v$ in the sense of the definitions above. 
It remains to note that the displayed equations express exactly this enumeration scheme. Indeed, among the leading terms of the Gröbner basis we have all elements $x(v,\mathrm{id})$ with $v \in \{ x, y, z, t\}$, and $x(\mathrm{id},z)$, so the equation
 \[
A_{x}(q) = q \cdot (q+ A_{x, x}^r(q)+A_{x, y}^r(q)+A_{x, t}^r(q)) 
 \]
expresses the fact that a normal form with top level operation $x$ must have the arity-one monomial $\mathrm{id}$ as its first argument, and a normal form with top level operation other than $z$ or the arity-one monomial $\mathrm{id}$ as its second argument. The other equations are obtained in the exact same way.

The second group of equations relating these series to one another  corresponds to elements of arity four in the Gröbner basis.
Namely, these series satisfy the following:
\begin{align} 
A_{x,y}^r(q) &= q \cdot q, \\ 
A_{y,x}^l(q) &= q \cdot (q+A_{x, x}^r(q)+A_{x, t}^r(q)),\label{eq:yxl} \\ 
A_{y,x}^r(q) &= q \cdot (q+A_{x, x}^r(q)+A_{x, t}^r(q)), \label{eq:yxr}\\ 
A_{z,t}^l(q) &= (q+A_{t, t}^l(q)) \cdot (q+A_{t, x}^r(q)+A_{t, t}^r(q)),\label{eq:ztl} \\ 
A_{z,x}^l(q) &= q \cdot (q+A_{x, x}^r(q)+A_{x, t}^r(q)),\label{eq:zxl} \\ 
A_{y,t}^l(q) &= (q+A_{t, t}^l(q)) \cdot (q+A_{t, x}^r(q)+A_{t, t}^r(q)),\label{eq:ytl} \\ 
A_{x,x}^r(q) &= q \cdot (q+A_{x, x}^r(q)+A_{x, t}^r(q)),\label{eq:xxr} \\ 
A_{t,t}^l(q) &= q \cdot (q+A_{t, x}^r(q)+A_{t, t}^r(q)),\label{eq:ttl} \\ 
A_{t,t}^r(q) &= (q+A_{t, t}^l(q)) \cdot (q+A_{t, x}^r(q)+A_{t, t}^r(q)),\label{eq:ttr} \\ 
A_{t,x}^r(q) &= q \cdot (q+A_{x, x}^r(q)+A_{x, t}^r(q)),\label{eq:txr} \\ 
A_{z,x}^r(q) &= q \cdot (q+A_{x, x}^r(q)+A_{x, t}^r(q)),
\label{eq:zxr}\\ 
A_{x,t}^r(q) &= (q+A_{t, t}^l(q)) \cdot (q+A_{t, x}^r(q)+A_{t, t}^r(q)).\label{eq:xtr}
\end{align}

Indeed, suppose that $u, v \in \{ x, y, z, t\}$, and $\alpha$ and $\beta$ are normal forms for which $v(\alpha,\beta)$ is a normal form, and $u(v(\alpha,\beta),\mathrm{id})$ is a normal form.
If we denote by $\tau(\alpha)$ and $\tau(\beta)$ the top level operations of $\alpha$ and $\beta$ respectively, this means that $\alpha$ is left compatible with $v$ and $\beta$ is right compatible with $v$ in the sense of the definitions above, and also that the arity four operations $u(v(\tau(\alpha),\mathrm{id}),\mathrm{id})$ and $u(v(\mathrm{id},\tau(\beta)),\mathrm{id})$ are normal forms. 
It remains to note that the displayed equations express exactly this enumeration scheme. Indeed, among the leading terms of the Gröbner basis we have elements $t(x,\mathrm{id})$ and $t(z,\mathrm{id})$, as well as $z(t(y,\mathrm{id}),\mathrm{id})$, and $t(\mathrm{id},y)$ and $t(\mathrm{id},z)$, so the equation
 \[
A_{z,t}^l(q) = (q+A_{t, t}^l(q)) \cdot (q+A_{t, x}^r(q)+A_{t, t}^r(q)) 
 \]
expresses the fact that a normal form with top level operation $t$ which remains a normal form after inserting as the first argument of the operation $z$ must have a normal form with top level operation $t$ or the arity-one monomial $\mathrm{id}$ as its first argument, and a normal form with top level operation $x$ or $t$ or the arity-one monomial $\mathrm{id}$ as its second argument. The other equations are obtained in the exact same way.

To solve the above system of equations, we first note that the right hand sides of Equations \eqref{eq:yxl}, \eqref{eq:yxr}, \eqref{eq:zxl}, \eqref{eq:xxr}, \eqref{eq:txr}, and \eqref{eq:zxr} are the same, so their left hand sides are also the same, hence
\begin{equation}\label{eq:firstgp}
A_{y,x}^l(q)=A_{y,x}^r(q)=A_{z,x}^l(q)=A_{x,x}^r(q)=A_{t,x}^r(q)=A_{z,x}^r(q).     
\end{equation}
Similarly, the right hand sides of Equations \eqref{eq:ztl}, \eqref{eq:ytl}, \eqref{eq:ttr}, and \eqref{eq:xtr} are the same, so their left hand sides are also the same, hence
\begin{equation}\label{eq:secondgp}
A_{z,t}^l(q)=A_{y,t}^l(q)=A_{t,t}^r(q)=A_{x,t}^r(q).     
\end{equation}
Furthermore, these equalities allow us to rewrite Equation \eqref{eq:ttl} as
 \[
A_{t,t}^l(q) = q \cdot (q+A_{x, x}^r(q)+A_{x, t}^r(q)), 
 \]
implying that $A_{t,t}^l(q)$ is equal to all the elements \eqref{eq:firstgp}. 

Denoting the common value of all the elements \eqref{eq:firstgp} by $R$ and the common value of all the elements \eqref{eq:secondgp} by $S$, we see that Equations \eqref{eq:x}, \eqref{eq:y}, \eqref{eq:z}, \eqref{eq:t}, \eqref{eq:yxl} and \eqref{eq:ztl} become (if we drop the implied argument $q$ of all power series)
\begin{gather*}
A_x=q(q+q^2+R+S),\\
A_y=(q+R+S)(q+R+A_t),\\
A_z=(q+R+S)(q+R+A_y+A_t),\\
A_t=(q+R+S)(q+R+A_y),\\
R=q(q+R+S),\\
S=(q+R)(q+R+S).    
\end{gather*}
Let us denote $C:=q+R+S$. Then we have $R=qC$, $S=(q+R)C=qC(1+C)$, and therefore $C=q+R+S=q(1+C)^2$. Also, $A_y=C(q+R+A_t)$ and $A_t=C(q+R+A_y)$, which immediately implies 
\[
A_y=A_t=\frac{C(q+R)}{1-C}=\frac{qC(1+C)}{1-C}.
\]
Moreover, 
 \[
A_z=C(q+R+A_y+A_t)=qC(1+C)+2CA_y=qC(1+C)+\frac{2qC^2(1+C)}{1-C}
=\frac{qC(1+C)^2}{1-C}
 \]
and 
 \[
A_x=q(q+q^2+R+S)=q^3+qC. 
 \]
Recalling that the Hilbert series of the operad $\mathcal{X}^-$ is equal to $q + A_x + A_y + A_z + A_t$, we can now write it as
 \[
q+q^3+qC+\frac{2qC(1+C)}{1-C}+\frac{qC(1+C)^2}{1-C}=
q^3+\frac{q(1+C)^3}{1-C}.
 \]
Let us now recall that $C=q(1+C)^2$. Differentiating this with respect to $q$, we obtain
 \[
C'=(1+C)^2+2q(1+C)C'=(1+C)^2+\frac{2C}{1+C}C', 
 \]
implying
 \[
C'\frac{1-C}{1+C}=(1+C)^2, 
 \]
or 
 \[
C'=\frac{(1+C)^3}{1-C}. 
 \]
Therefore 
 \[
q^3+\frac{q(1+C)^3}{1-C}=q^3+qC'. 
 \]
Finally, $C=q(1+C)^2$ means that $C$ is the generating series for the Catalan numbers, 
 \[
C(q)=\sum_{n\ge 1}\frac{1}{n+1}\binom{2n}{n}q^n, 
 \]
so $qC'(q)$ is the generating function for 
 \[
\frac{n}{n+1}\binom{2n}{n}=\binom{2n}{n-1}, 
 \]
as required.
\end{proof}

\section{Dimensions of components of the operad \texorpdfstring{$\mathcal{X^+}$}{Xplus}}\label{sec:Xplus}

In this section, we shall prove the following result.

\begin{thm}\label{th:Xplus}
For dimensions of components of the operad $\mathcal X^+$, we have 
    \[
    \dim_{\mathbb{Q}}\mathcal X^+_n=\binom{2n}{n-1}+1
    \]
for all $n\ge 5$.    
\end{thm}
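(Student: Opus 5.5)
We will follow the strategy used for Theorem~\ref{th:Xminus}, working with the same weight graded path-lexicographic order $\prec_{\mathrm{wpl}}$. The sixteen defining relations of $\mathcal X^+$ differ from those of $\mathcal X^-$ only by the sign in the last relation. We therefore expect the arity three part of the Gröbner basis to be the same list \eqref{eq:ar3begin}--\eqref{eq:ar3end}, up to signs in the first two rules; the leading terms, and hence the normal forms in arity three, coincide. The introduction says the Gröbner basis of $\mathcal X^+$ for this order is infinite but manageable. The first step is to determine it completely. We would compute it by machine up to arity six or seven (the script in \cite{addendum}) and read off a pattern. We expect all of the arity four elements \eqref{eq:ar4begin}--\eqref{eq:extra2} except possibly one or two to persist. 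We also expect an infinite family of elements, one in each arity $n\ge 4$ or $n\ge 5$, whose leading terms are left or right combs, for instance of the shape $t(\mathrm{id},t(\mathrm{id},\dots,y(\mathrm{id},\mathrm{id})))$ or a similar iterated pattern. Each of these would kill exactly one monomial that is a normal form for $\mathcal X^-$, or fail to kill one.

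The second step is to prove that this conjectured family really is a Gröbner basis. We would do this by induction on arity. First, each element of the family is obtained from the previous one by composing with a generator and reducing by the arity three rules; this shows the elements lie in the ideal. Second, all S-polynomials (overlaps of leading terms) reduce to zero. Because every leading term has $\mathrm{id}$ as one argument of its top operation and the family is comb-like, the overlaps fall into finitely many types parametrised by $n$. The finite ones are checked by computer. The infinite ones are checked by a uniform rewriting computation showing that the overlap in arity $n+1$ reduces to the overlap in arity $n$ composed with a generator. This verification is the main obstacle. The delicate point is that the sign $+$ in the last relation is exactly what prevents a cancellation that occurs for $\mathcal X^-$, so the reductions must be tracked with signs, and that sign bookkeeping is what creates the extra element in each arity.

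The third step is enumeration. We would compare the normal forms for $\mathcal X^+$ with those for $\mathcal X^-$ rather than redo the generating function system. Concretely, we aim to show that for $n\ge 5$ the set of $\mathcal X^+$ normal forms equals the set of $\mathcal X^-$ normal forms plus exactly one extra monomial. That extra monomial would be a comb that is forbidden in $\mathcal X^-$ by \eqref{eq:extra1} or \eqref{eq:extra2}, which are absent (or replaced by the infinite family) in $\mathcal X^+$. If instead the combinatorics is less clean, we would write the analogue of the system \eqref{eq:x}--\eqref{eq:xtr}, adding a correction series for the comb family. We would then solve it exactly as before, using $C=q(1+C)^2$, to get the Hilbert series $q^3+qC'(q)+\frac{q^5}{1-q}+P(q)$, where $P$ is a polynomial of degree at most $4$ that accounts for the small arities $58$ and $16$. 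Extracting coefficients with Theorem~\ref{th:Xminus} then gives $\binom{2n}{n-1}+1$ for $n\ge5$.
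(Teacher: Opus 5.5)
Your outline has the same architecture as the paper's proof: a Gröbner basis for $\prec_{\mathrm{wpl}}$, an infinite family found by machine, inductive control of S-polynomials, and a comparison of normal forms with those of $\mathcal X^-$. However, every step that carries content is deferred, and several of your predictions are wrong, so as it stands this is a plan rather than a proof.

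\textbf{The basis is never specified, and your guesses about it are off.} In fact:
\begin{itemize}
\item $G_4^+$ consists of the fourteen elements with the leading terms of \eqref{eq:ar4begin}--\eqref{eq:ar4end}. Exactly \eqref{eq:extra1} and \eqref{eq:extra2} do not arise for $\mathcal X^+$, and this arity-four absence is where the extra normal forms originate. Your sign heuristic about a cancellation failing in higher arity misplaces this.
\item $G_5^+$ is an irregular batch of eleven elements. Ten of them kill every arity-five monomial that avoids the earlier leading terms but contains $t(\mathrm{id},x(\mathrm{id},y(\mathrm{id},\mathrm{id})))$. The eleventh has leading term $x(a_1,y(a_2,x(a_3,t(a_4,a_5))))$.
\item Only from arity six on is there a single element $R_n$ per arity. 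Its leading term is the right comb $x(a_1,y(a_2,x(a_3,\ldots,x(a_{n-2},t(a_{n-1},a_n))\ldots)))$.
\end{itemize}
These leading terms are not normal forms for $\mathcal X^-$, since they contain the leading term of \eqref{eq:extra1}. The extra normal form in arity $n$ is the other comb, $x(a_1,y(a_2,x(a_3,\ldots,x(a_{n-1},a_n)\ldots)))$.

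Your premise that every leading term has $\mathrm{id}$ as an argument of its top operation is false for $\mathcal X^+$. The monomials $t(y(a_1,a_2),x(a_3,y(a_4,a_5)))$ and $t(t(a_1,a_2),x(a_3,y(a_4,a_5)))$ are leading terms in $G_5^+$. So neither your overlap classification nor your fallback system of equations can rest on that property. In any case, the fallback simply posits the final Hilbert series instead of deriving it.

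\textbf{The decisive gap is the verification of infinitely many S-polynomials.} You propose that the overlap in arity $n+1$ reduces to the overlap in arity $n$ composed with a generator. This does not match the structure: the leading term of $R_{n+1}$ arises from that of $R_n$ by inserting one more $x$ into the middle of the comb, which is not an operadic composition. The induction works because of four specific observations:
\begin{itemize}
\item The leading terms of the $R_i$ never overlap one another.
\item Among leading terms of arity $\le 5$, only those of \eqref{eq:lc-xy}--\eqref{eq:lc-xx} can overlap the middle $x$'s. Those S-polynomials are reduced explicitly using $R_i$ with $i<n$ and a few small elements. The one coming from \eqref{eq:lc-xx} is exactly what produces $R_{n+1}$.
\item Overlaps at the top or bottom of the comb never touch the middle. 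So the middle string of $x$'s can be compressed into an auxiliary binary operation $\lambda$, turning these cases into a finite computation up to arity nine.
\item Overlaps among elements of arity $\le 5$ live in arity at most eight and are checked by computer.
\end{itemize}

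\textbf{The count also needs two facts you do not address.} First, no normal form of arity $\ge 5$ contains $t(\mathrm{id},x(\mathrm{id},y(\mathrm{id},\mathrm{id})))$. Second, since the left argument of every $x$ in a normal form is $\mathrm{id}$, the only monomials that avoid all other leading terms and contain $x(\mathrm{id},y(\mathrm{id},x(\mathrm{id},\mathrm{id})))$ are the two combs ending in $t(a_{n-1},a_n)$ and in $x(a_{n-1},a_n)$. Without these, ``exactly one extra normal form'' is just the conjecture restated.
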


\begin{proof}
We shall once again use Gröbner bases for operads, though the argument is more subtle, since we were not able to find a finite Gröbner basis in this case. 

Let us denote by $G_i^+$ the arity $i$ part of the reduced Gröbner basis of the operad $\mathcal X^+$ for the order $\prec_{\mathrm{wpl}}$. Gröbner bases for operads are similar to Gröbner bases for \emph{homogeneous} associative noncommutative algebras: while usually infinite, they stabilize arity by arity, so the part of the reduced Gröbner basis in bounded arity is obtained by a finite computation, and as the first step of the proof, we  compute $G_i^+$ for all $i\le 6$.

To follow the proof, it will be particularly important to know the leading terms of all elements of the partial Gröbner basis we computed; the explicit formulas for all its elements are placed in the online addendum to this paper \cite{addendum}. 
 
The elements of $G_3^+$ have the same sixteen leading terms as the elements \eqref{eq:ar3begin}--\eqref{eq:ar3end}: these elements are obtained by echelonizing the matrix of coefficients of relations, and the single sign difference between the relations of $\mathcal X^-$ and the relations of $\mathcal X^+$ does not affect the positions of the pivots in the reduced row echelon form. However, $G_4^+$ has only fourteen elements, and their leading terms are the same as the leading terms of \eqref{eq:ar4begin}--\eqref{eq:ar4end}. It follows that the normal forms for $\mathcal{X}^+$ in arity four are the normal forms for $\mathcal{X}^-$ together with the leading terms 
\begin{equation}
x(a_1, y(a_2, x(a_3, a_4))),\quad
t(a_1, x(a_2, y(a_3, a_4)))\label{eq:extralead} 
\end{equation}
of \eqref{eq:extra1} and \eqref{eq:extra2}. (Of course, this accounts for the fact that $\dim\mathcal{X}_4^-=56$ but $\dim\mathcal{X}_4^+=58$.) 

Unlike what we observed for the operad $\mathcal{X}^-$, $G_5^+$ consists of eleven elements, and their leading terms are
    \[
\begin{gathered}
z(a_1,t(a_2,x(a_3,y(a_4,a_5)))), \qquad
t(y(a_1,a_2),x(a_3,y(a_4,a_5))), \\[2pt]
t(t(a_1,a_2),x(a_3,y(a_4,a_5))), \qquad
x(a_1,t(a_2,x(a_3,y(a_4,a_5)))), \\[2pt]
y(a_1,t(a_2,x(a_3,y(a_4,a_5)))), \qquad
z(t(a_1,x(a_2,y(a_3,a_4))),a_5), \\[2pt]
t(a_1,x(a_2,y(a_3,x(a_4,a_5)))), \qquad
t(t(a_1,x(a_2,y(a_3,a_4))),a_5), \\[2pt]
y(t(a_1,x(a_2,y(a_3,a_4))),a_5), \qquad
t(a_1,t(a_2,x(a_3,y(a_4,a_5)))), \\[2pt]
x(a_1, y(a_2, x(a_3, t(a_4, a_5)))).
\end{gathered}
    \]
Examining all of this data closely, we find the following. 

First of all, there are only two monomials of arity five that do not contain the leading terms of \eqref{eq:ar3begin}--\eqref{eq:ar4end} as fragments, but contain the first of \eqref{eq:extralead} as a fragment, namely
 \[
x(a_1, y(a_2, x(a_3, t(a_4, a_5)))),\qquad 
x(a_1, y(a_2, x(a_3, x(a_4, a_5)))),
 \]
where the first one is exactly the last leading term of $G_5^+$. Furthermore, the monomials of arity five that do not contain the leading terms \eqref{eq:ar3begin}--\eqref{eq:ar4end} as fragments but contain the second of \eqref{eq:extralead} as a fragment are 
\begin{gather*}
z(a_1,t(a_2,x(a_3,y(a_4,a_5)))), \qquad
t(y(a_1,a_2),x(a_3,y(a_4,a_5))), \\[2pt]
t(t(a_1,a_2),x(a_3,y(a_4,a_5))), \qquad
x(a_1,t(a_2,x(a_3,y(a_4,a_5)))), \\[2pt]
y(a_1,t(a_2,x(a_3,y(a_4,a_5)))), \qquad
z(t(a_1,x(a_2,y(a_3,a_4))),a_5), \\[2pt]
t(a_1,x(a_2,y(a_3,x(a_4,a_5)))), \qquad
t(t(a_1,x(a_2,y(a_3,a_4))),a_5), \\[2pt]
y(t(a_1,x(a_2,y(a_3,a_4))),a_5), \qquad
t(a_1,t(a_2,x(a_3,y(a_4,a_5)))), 
\end{gather*}
which are all but the last leading terms of $G_5^+$. It follows that the normal forms for $\mathcal{X}^+$ in arity five are the normal forms for $\mathcal{X}^-$ together with one additional element  
\begin{equation}\label{eq:extraar5}
x(a_1, y(a_2, x(a_3, x(a_4, a_5)))).     
\end{equation}
(Of course, this accounts for the fact that we have $\dim\mathcal{X}_5^+-\dim\mathcal{X}_5^-=211-210=1$.)

Continuing further, $G_6^+$ consists of exactly one element corresponding to a very simple rewriting rule
 \[
x(a_1, y(a_2, x(a_3, x(a_4, t(a_5, a_6))))) \mapsto x(a_1, t(a_2, x(a_3, x(a_4, t(a_5, a_6))))).
 \] 
Similarly to the above, there are only two monomials of arity six that do not contain the leading terms of $G_i^+$ for $i\le 5$ as fragments, but contain \eqref{eq:extraar5} as a fragment, namely
 \[
x(a_1, y(a_2, x(a_3, x(a_4, t(a_5, a_6))))),\qquad 
x(a_1, y(a_2, x(a_3, x(a_4, x(a_5, a_6))))),
 \]
and the first of them is the leading term of the only element of $G_6^+$. It follows that the normal forms for $\mathcal{X}^+$ in arity six are the normal forms for $\mathcal{X}^-$ together with one additional element  
\begin{equation}\label{eq:extraar6}
x(a_1, y(a_2, x(a_3, x(a_4, x(a_5, a_6)))).     
\end{equation}

At this point, the computation suggests that we can proceed by induction. That is, we shall prove that for each $n\ge 6$, $G_n^+$ contains exactly one rewriting rule $R_n$, which is given by 
\begin{align*}
x(a_1, y(a_2, x(a_3, \cdots, x(a_{n - 2}, t(a_{n - 1}, a_n)) \cdots))) \mapsto \\ x(a_1, t(a_2, x(a_3, \cdots, x(a_{n - 2}, t(a_{n - 1}, a_n)) \cdots))).
\end{align*}

The basis of induction is $n=6$ which our computation has already established. Suppose that in each arity $6\le i\le n$, the statement holds. The algorithm for computing the arity $n+1$ part of the Gröbner basis requires us to form all elements of arity $n+1$ that are overlaps of leading terms of elements of $G_i^+$ with elements of $G_j^+$ with $j\le n$, compute the S-polynomials for these overlaps, and then compute the reduced forms of those S-polynomials with respect to all $G_i^+$, with $i\le n$; the set $G_{n+1}^+$ is the result of echelonizing the resulting reduced forms; see \cite[Sec.~3.5.2]{MR3642294} for details.

The Gröbner basis we are exhibiting consists of two parts: the unstable part consisting of elements of $G_3^+$, $G_4^+$, and $G_5^+$, and the stable part consisting of elements of $G_k^+$ with $k\ge 6$. Therefore, when considering overlaps, there are three cases: overlaps of leading terms of two elements of the unstable part, overlaps of leading terms of two elements of the stable part, and overlaps of a leading term of an element of the unstable part with a leading term of an element of the stable part. 

We note that an overlap of two leading terms of elements of the unstable part has arity at most $5+5-2=8$, and one may check the claim for such arities by a direct computation using the program \cite{Dotsenko-Heijltjes}; the input file for that computation is available in the online addendum \cite{addendum}. 
Furthermore, for any $i\ge 6$, the leading term of $R_i$ does not form an overlap with the leading term of $R_j$ for any $j\ge 6$, so there are no overlaps of leading terms of two elements of the stable part.

Let us examine overlaps of a leading term of an element of the unstable part with a leading term of an element of the stable part. Every leading term of an element of the stable part is the right-normed product
 \[
x(a_1, y(a_2, x(a_3, \cdots, x(a_{k - 2}, t(a_{k - 1}, a_k))\cdots ))), \quad k\ge 6. 
 \]
We shall split the operations forming it into three groups: the \emph{top}  operations $x$, $y$, the \emph{bottom} operations $x$, $t$, and the remaining \emph{middle} operations $x$; note that since $k\ge 6$, we have at least one middle operation. It is important to note that most of the leading terms of the unstable part cannot form overlaps involving the middle operations: the only leading terms that can form such overlaps correspond to the rewriting rules
\begin{gather}
x(y(a_1, a_2), a_3)  \mapsto  y(a_1, x(a_2, a_3)),\label{eq:lc-xy}\\
x(t(a_1, a_2), a_3)  \mapsto  t(a_1, x(a_2, a_3)),\label{eq:lc-xt}\\
x(z(a_1, a_2), a_3)  \mapsto z(a_1, x(a_2, a_3))  +  z(a_1, t(a_2, a_3)),\label{eq:lc-xz}\\
x(x(a_1, a_2), a_3)  \mapsto x(a_1, x(a_2, a_3))  +  x(a_1, t(a_2, a_3)).\label{eq:lc-xx}
\end{gather}

The overlaps of these leading terms with the leading term of $R_k$ that involve middle operations can be examined similarly.

The reduction of the S-polynomial corresponding to an overlap of $R_n$ with \eqref{eq:lc-xy} depends on whether we consider the overlap over the middle $x$-operation closest to the top, or with some other one. In the former case, the S-polynomial corresponds to the rule
\begin{align*}
&x(a_1, y(a_2, y(a_3, x(a_4, \dots, x(a_{n - 1}, t(a_n, a_{n + 1})) \dots)))) \\
&\quad \mapsto x(a_1, t(a_2, x(y(a_3, a_4), x(a_5, \dots, x(a_{n - 1}, t(a_n, a_{n + 1})) \dots)))),
\end{align*}
whose leading term contains the leading term of (\ref{eq:yry}) as a fragment. The latter case is applicable only if $n \geq 7$, because that is when the middle part contains at least two operations. In that case, for the overlap at the position $i > 1$, the S-polynomial is equivalent to a rewriting rule

\begin{align*}
    x(a_1, y(a_2, x(a_3, \cdots, x(a_{i - 1}, y(a_i, x(a_{i + 1}, \cdots, x(a_{n - 1}, t(a_{n}, a_{n + 1})) \cdots))) \cdots))) \\ \mapsto x(a_1, t(a_2, x(a_3, \cdots, x(y(a_i, a_{i + 1}), x(a_{i + 2}, \cdots, x(a_{n - 1}, t(a_{n}, a_{n + 1})) \cdots)) \cdots))),
\end{align*}
which contains the leading term of $R_{n - (i - 1)}$ along the subsequence of operations in the interval $(i - 1)$ and $n$. 

The overlaps between the middle part of $R_n$ and (\ref{eq:lc-xt}) - (\ref{eq:lc-xx}) are treated similarly. The reduction to zero of the S-polynomial corresponding to an overlap of $R_n$ with \eqref{eq:lc-xt} is immediate using the element
 \[
x(a_1, y(a_2, t(a_3, a_4))) \to  x(a_1, t (a_2, t (a_3, a_4))) 
 \]
for the overlap over the middle $x$-operation closest to the top  
and using $R_i$ for some $i<n$ otherwise. The reduction of the S-polynomial corresponding to an overlap of $R_n$ with \eqref{eq:lc-xz} depends on whether we consider the overlap over the middle $x$-operation closest to the top, or with some other one. In the former case, the reduction to zero is done using several elements of $G_3^+$ and $G_4^+$, while in the latter case, the S-polynomial is reduced to zero using various $R_i$ for $i<n$. Finally, the S-polynomial corresponding to an overlap of $R_n$ with \eqref{eq:lc-xx} reduces to $R_{n+1}$ using $R_i$ for various $i\le n$.

In all other overlaps of a leading term of an element of the unstable part with a leading term of an element of the stable part, the common operations of the two leading terms are either the top level or the bottom level operations of the stable leading term. We note that the corresponding S-polynomial and its reductions are computed without touching the middle operations. This allows us to use the following computational trick. We adjoin to our operad a binary operation $\lambda$, and one additional relation 
 \[
x(a_1, y(a_2, \lambda(a_3, x(a_4, t(a_5, a_6))))) \mapsto x(a_1, t(a_2, \lambda(a_3, x(a_4, t(a_5, a_6))))).
 \]
(This operation $\lambda$ is nothing but the compressed form of the iterated middle operations.) If one computes the Gröbner basis for this operad up to arity $6+5-2=9$ using the program \cite{Dotsenko-Heijltjes} (the input file for that computation is available in the online addendum \cite{addendum}), and replaces $\lambda$ by a right-normed product of several operations $x$, the result consists of the reduced forms of the S-polynomials being considered. A simple \texttt{Python} script (available in the online addendum \cite{addendum}) may be used to check that this only produces elements $R_s$ for various $s$. This proves our assertion about the Gröbner basis. 

To conclude the argument, we note that we in particular proved that in every arity $n\ge 5$, monomials that do not contain leading terms of elements of $G_3^+$, $G_4^+$, and $G_5^+$ as fragments do not contain the second of \eqref{eq:extralead} as a fragment. Thus, for $n\ge 5$, normal forms for $\mathcal{X}_n^+$ that are not normal forms for $\mathcal{X}_n^-$ may only contain the first of \eqref{eq:extralead}. Moreover, since the left argument of an operation labelled $x$ in a normal form must be the arity-one monomial $\mathrm{id}$, one immediately sees by induction on $n\ge 6$ that there are only two monomials that are not normal forms for $\mathcal{X}^-$ and do not contain the leading terms of $R_i$, $6\le i< n$, that is,  
\begin{gather*}
x(a_1, y(a_2, x(a_3, \cdots, x(a_{n - 2}, t(a_{n - 1}, a_n))\cdots ))),\\    
x(a_1, y(a_2, x(a_3, \cdots, x(a_{n - 2}, x(a_{n - 1}, a_n))\cdots ))),
\end{gather*}
and the first of them is the leading term of $R_n$. Hence, we have only one extra (in comparison with normal forms for $\mathcal{X}^-$) normal form in each arity, which proves the required dimension formula.
\end{proof}

\section{On Koszulness of operads \texorpdfstring{$\mathcal{X}^{\pm}$}{Xpm}}\label{sec:Koszul}

In \cite[Sec.~4.2]{Loday}, it is recalled that if an operad $\mathcal{X}$ ($=\mathcal{X}^+$ or $\mathcal{X}^-$) is self-dual for the Koszul duality, then for each $k\ge 1$, there is a finite chain complex $C^{(k)}_\bullet(\mathcal{X})$ with 
 \[
C^{(k)}_r(\mathcal{X})\cong\bigoplus_{m_1+\cdots+m_r=k}\mathcal{X}_r\otimes\mathcal{X}_{m_1}\otimes\cdots\otimes\mathcal{X}_{m_r}, 
 \]
and acyclicity of these complexes for all $k>1$ is equivalent to the Koszul property of~$\mathcal{X}$; this is essentially a way to present the Koszul complex of the operad $\mathcal{X}$ \cite{MR1301191,LodayVallette} to non-experts. It is then asserted that this shows that if $\mathcal{X}$ is Koszul, then $\dim\mathcal{X}_n=4^{n-1}$. (And it is precisely the property $\dim\mathcal{X}_n=4^{n-1}$ which was then disproved in \cite[Prop.~2.4.3.3]{Laubie} for both operads $\mathcal{X}^+$ and $\mathcal{X}^-$.) However, this assertion seems to be unfounded. Indeed, the acyclicity of the complexes $C^{(k)}_\bullet(\mathcal{X})$ for $k>1$ implies that the Hilbert series $f_{\mathcal{X}}(q)$
satisfies 
 \[
f_{\mathcal{X}}(-f_{\mathcal{X}}(-q))=q, 
 \]
but that property together with $\dim\mathcal{X}_2=4$, $ \dim\mathcal{X}_3=16$ is not enough to show that $\dim\mathcal{X}_n=4^{n-1}$ for all $n$. Indeed, if 
 \[
f_{\mathcal{X}}(q)=q+4q^2+16q^3+aq^4+bq^5+O(q^6), 
 \]
we have 
 \[
f_{\mathcal{X}}(-f_{\mathcal{X}}(-q))=
 q + (-24a + 2b + 1024)q^5 + O(q^6)
 \]
which does not allow us to determine $a$ and $b$ individually: we can only infer 
 \[
12a=b+512. 
 \]
However, for the operad $\mathcal{X}^-$, we have $a=56$ and $b=210$, and for the operad $\mathcal{X}^+$, we have $a=58$ and $b=211$, and neither pair satisfies the necessary constraint. Thus, neither of the operads $\mathcal{X}^+$ and $\mathcal{X}^-$ is Koszul.

\section{Case of a ground field of positive characteristic}\label{sec:char-p}

In this section, we describe in detail what happens if we consider our operads over the ground field $\mathbb{F}_p$. It is \emph{a priori} clear that some dependence of characteristic is there: for instance, for $p=2$, we have $\mathcal{X}^-=\mathcal{X}^+$, so it is clear that the dimension formulas proved above cannot be true at the same time. 

\begin{thm}\leavevmode
\begin{itemize}
\item For $p\ne 2$, we have 
\begin{gather*}
\dim_{\mathbb{F}_p}\mathcal{X}^-_n=\binom{2n}{n-1}\quad  \text{ for }n\ne 3,\\
\dim_{\mathbb{F}_p}\mathcal{X}^+_n=\binom{2n}{n-1}+1\quad  \text{ for } n\ge 5.
\end{gather*}
\item For $p=2$, we have
 \[
\dim_{\mathbb{F}_p}\mathcal{X}^-_n= \dim_{\mathbb{F}_p}\mathcal{X}^+_n=\binom{2n}{n-1}+1\quad  \text{ for } n\ge 5.
 \]
\end{itemize}
\end{thm}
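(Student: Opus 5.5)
The plan is to rerun the Gröbner basis arguments of Theorems~\ref{th:Xminus} and~\ref{th:Xplus} over $\mathbb{F}_p$ and pin down exactly where division by a constant occurs. Dimensions of components are governed by the leading terms of the reduced Gröbner basis. Over $\mathbb{Q}$, all the displayed rewriting rules have coefficients $0,\pm1$. So for a prime $p$ the computation over $\mathbb{F}_p$ agrees with the reduction mod $p$ of the computation over $\mathbb{Q}$, provided no pivot in the echelonization steps (arity by arity) vanishes mod $p$. I would therefore first redo the finite computations (arities $\le 8$ for the unstable part, arity $\le 9$ for the $\lambda$-trick operad) using \cite{Dotsenko-Heijltjes}, now recording the denominators and pivots that appear. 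The expectation, suggested by the fact that $\mathcal{X}^-$ and $\mathcal{X}^+$ differ by a single sign, is that the only prime ever inverted is $2$.

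For $p\ne 2$, the key step is to check that the elements \eqref{eq:ar3begin}--\eqref{eq:extra2} still form a Gröbner basis of $\mathcal{X}^-$ over $\mathbb{F}_p$. They lie in the ideal generated by the relations, which follows by reducing the integral expressions of them in terms of the relations, as long as those expressions have no denominators divisible by $p$. Their S-polynomials reduce to zero over $\mathbb{Z}[1/2]$, hence mod $p$. Given this, the enumeration of normal forms in Theorem~\ref{th:Xminus} is purely combinatorial and carries over verbatim. The same reasoning applies to $\mathcal{X}^+$. The partial bases $G_3^+,\dots,G_6^+$ and the output of the $\lambda$-computation are checked to be defined over $\mathbb{Z}[1/2]$ with unit leading coefficients. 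The inductive step producing $R_{n+1}$ in Theorem~\ref{th:Xplus} only uses reductions with coefficients $\pm1$ (the rules $R_i$ and \eqref{eq:lc-xy}--\eqref{eq:lc-xx}), so it is characteristic-free.

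For $p=2$, the two operads coincide, so it suffices to treat $\mathcal{X}^+$ over $\mathbb{F}_2$. I expect the mechanism to be the following. The distinction between \eqref{eq:extra1}--\eqref{eq:extra2} being present in $G_4^-$ and absent from $G_4^+$ comes from an S-polynomial in arity four whose reduced form is $2$ times (or $1\pm1$ times) the difference of the two sides. Mod $2$ this vanishes, so the $\mathcal{X}^+$ pattern is realized. I would verify that the $\mathbb{F}_2$ computation in arities $\le 9$ reproduces exactly the leading terms of $G_3^+,G_4^+,G_5^+,G_6^+$ and the $\lambda$-output. Then the same induction and the same count of the extra normal form $x(a_1,y(a_2,x(a_3,\dots,x(a_{n-1},a_n)\dots)))$ give $\binom{2n}{n-1}+1$ for $n\ge5$.

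The main obstacle is the $p=2$ case, and more generally certifying that no other small prime (such as $3$) causes a pivot to drop. A collapse of a pivot could in principle occur in the unstable range and alter the leading terms. I would first try the direct route: run the Gröbner basis computation over $\mathbb{F}_2$, $\mathbb{F}_3$, and over $\mathbb{Z}$ with content tracking up to arity $9$, and show that all denominators are powers of $2$. For $p=2$, the extra leading terms found by this computation must then be checked to match those of $\mathcal{X}^+$ over $\mathbb{Q}$.
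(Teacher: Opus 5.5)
Your proposal is correct and is essentially the paper's argument: a Gröbner basis over $\mathbb{Q}$ whose computation only ever divides by scalars invertible mod $p$ reduces to a Gröbner basis over $\mathbb{F}_p$ with the same leading terms. The only bad prime for $\mathcal{X}^-$ is $2$ (leading coefficients $2$ appear in arity four), while the computation for $\mathcal{X}^+$ (arities $\le 6$, plus the inductive step producing $R_{n+1}$ with coefficient $1$) involves no nontrivial division at all. The paper saves one step: because the $\mathcal{X}^+$ basis is integral with leading coefficients $\pm1$, not merely defined over $\mathbb{Z}[1/2]$, the case $p=2$ follows at once from $\mathcal{X}^-=\mathcal{X}^+$, with no separate $\mathbb{F}_2$ computation.
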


\begin{proof}
Let us begin with a general useful remark. Suppose that an operad $\mathcal{P}$ is defined over $\mathbb{Z}$, and that $G$ is a system of relations of that operad which is, for a given order of monomials, a Gröbner basis of the operad $\mathbb{Q}\otimes_\mathbb{Z}\mathcal{P}$. Then, if the leading coefficients of the elements of $G$ are coprime to $p$, then $G$ is a Gröbner basis of $\mathcal{P}$ over $\mathbb{F}_p\otimes_\mathbb{Z}\mathcal{P}$. This follows from the Diamond Lemma \cite[Sec. 3.5.1]{MR3642294}: testing the Gröbner basis property amounts to check that all S-polynomials reduce to zero, and for that we only ever need to divide by the leading coefficients, so this computation survives a reduction modulo~$p$.

First, a direct computation (implemented in a \texttt{Python} script available in the online addendum \cite{addendum}) shows that the computation of the arity four part of the reduced Gröbner basis for $\mathcal{X}^-$ for our order of monomials only creates some leading coefficients equal to $2$. Thus, for any characteristic $p\ne 2$ the operad $\mathcal{X}^-$ has the same reduced Gröbner basis as the one presented in the proof of Theorem \ref{th:Xminus}. 

Next, a direct computation (implemented in a \texttt{Python} script available in the online addendum \cite{addendum}) shows that the computation of the arity at most six part of the reduced Gröbner basis for $\mathcal{X}^+$ for our order of monomials does not involve any division by a nontrivial scalar: the leading coefficients are automatically $\pm1$. Moreover, in the proof of Theorem \ref{th:Xplus}, we established that for $n\ge 6$, each element $R_{n+1}$ can be obtained by reducing the S-polynomial of $R_n$ with the arity three relation \eqref{eq:lc-xx}; by direct inspection, in the process of this reduction, $R_{n+1}$ appears with coefficient $1$. Thus, for any characteristic $p>0$, the operad $\mathcal{X}^+$ has the same reduced Gröbner basis as the one presented in the proof of Theorem \ref{th:Xplus}.
  
Finally, over a field of characteristic $2$, the operads $\mathcal{X}^-$ and $\mathcal{X}^+$ coincide, so the previous argument handles this case as well, completing the proof. 
\end{proof}

\printbibliography

\end{document}